\newtheorem{theorem}{Theorem}
\newtheorem{lemma}[theorem]{Lemma}
\newtheorem{corollary}[theorem]{Corollary}
\theoremstyle{definition}
\newtheorem{definition}[theorem]{Definition}
\newtheorem{example}[theorem]{Example}
\newtheorem{question}[theorem]{Question}
\theoremstyle{remark}
\newtheorem{remark}[theorem]{Remark}
\newcommand{\SL}{\mathrm{SL}}
\newcommand{\PSL}{\mathrm{PSL}}
\newcommand{\Z}{\mathbb{Z}}
\newcommand{\F}{\mathbb{F}}
\newcommand{\FF}{\mathcal{F}}
\begin{document}

\title{Bounding the residual finiteness of free groups}

\author{Martin Kassabov}
\address{Cornell University, Department of Mathematics,
590 Malott Hall, Ithaca, NY 14853, USA}
\curraddr{School of Mathematics, University of Southampton,
University Road, Southampton, SO17 1BJ, UK}
\email{kassabov@math.cornell.edu}
\thanks{The first author was partially funded by  grants
from National Science Foundation DMS~0600244, 0635607 and~0900932.}

\author{Francesco Matucci}
\address{University of Virginia, Department of Mathematics,
Charlottesville, VA 22904, USA}
\email{fm6w@virginia.edu}

\subjclass[2000]{Primary 20F69; Secondary 20E05, 20E07, 20E26}

\keywords{Free group, residually finite group, identities in a group}

\begin{abstract}
We find a lower bound to the size of finite groups detecting
a given word in the free group, more precisely
we construct a word $w_n$ of length $n$ in non-abelian free groups 
with the property that $w_n$ is the identity on all
finite quotients of size $\sim n^{2/3}$ or less.
This improves on a previous result of Bou-Rabee and McReynolds
quantifying the lower bound of the residual finiteness of free groups.
\end{abstract}

\maketitle

A group $G$ is called \emph{residually finite} if for any $w\in G$, $w\ne 1$
there exists a finite group $H$ and
a homomorphism $\varphi:G \to H$ such that $\varphi(w)\ne 1$. We will say that
such group $H$ \emph{detects} the element $w$.
One way to quantify this property is look at the minimal size of a finite
group $H$ which detects a given word, and study the behavior of this function.
We define the following natural growth function to measure
the residual finiteness of a group
(introduced by Bou-Rabee in~\cite{bourabee1}):
\[
k_G(w):=\min \{|H|  \mid \mbox{there exists } \pi:G \to H, \pi(w) \ne 1 \}
\]
and
\[
F_G^S(n):=\max \{ k_G(w) \mid |w|_S \le n \},
\]
where $S$ is a generating set of the group $G$ and $|w|_S$ denotes
the word length of $w$ with respect to the generating set $S$.

We also write $f_1 \preceq f_2$ to mean that there exists a $C$ such
that $f_1(n) \le C f_2(Cn)$ for all $n$,
and we write $f_1 \simeq f_2$ to mean $f_1 \preceq f_2$ and
$f_2 \preceq f_1$. It is easy to see
that if $G$ is finitely generated the
growth type of the function $F_G^S$ does not depend on the set $S$,
assuming that it is finite.

In this short note, we will focus on the free group $\FF_k$ on $k$ generators.
Bou-Rabee~\cite{bourabee1} and Rivin~\cite{rivin1}
have shown that $F_{\FF_k}(n) \preceq n^3$.
Both proofs are obtained by embedding the
free group $\FF_k$ into $\SL_2(\Z)$
and then finding a suitable prime $p$ such that a given word does not vanish
in the quotient $\SL_2(\Z/p\Z)$, for a slightly
different proof see Remark~\ref{thm:cubic-upper}.

Recently, Bou-Rabee and McReynolds~\cite{boumcrey1}
have shown (see Corollary~\ref{thm:third})
that
$F_{\FF_k}(n) \succeq n^{1/3}$.
We improve %
this lower bound, establishing the following result:

\begin{theorem}
$F_{\FF_k}(n) \succeq n^{2/3}$. \label{thm:main}
\end{theorem}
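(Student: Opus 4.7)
The plan is to exhibit, for each $N$, an explicit nontrivial word $w_{N}\in\FF_{k}$ of length at most $CN^{3/2}$ that vanishes in every finite quotient of order at most $N$. Setting $n=\lfloor CN^{3/2}\rfloor$ then yields $F_{\FF_{k}}(n)\succeq n^{2/3}$.

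Let $N_{k}(c):=\bigcap\{H\triangleleft\FF_{k}:[\FF_{k}:H]\le c\}$, so that $w\in N_{k}(c)$ iff $w=1$ in every quotient of order $\le c$. The task is to produce a short nontrivial element of $N_{k}(N)$. Two elementary operations are used to build such a word inductively. The first, \emph{powering}, replaces a subword $u$ by $u^{m}$: this multiplies its length by $m$, and in any quotient $H$ it forces the image of $u^{m}$ into a verbal subgroup of $H$ whose size is controlled by the element orders in $H$. The second, \emph{commutating}, replaces a pair $(u,v)$ by $[u,v]$: this essentially doubles the total length but pushes the image one step further down the derived series of $H$. The target word is a balanced iterated commutator of depth $d\approx\tfrac12\log_{2}N$ with ``leaves'' of the form $x^{m}$, $y^{m}$ for $m\approx N^{1/2}$; its total length is $\sim m\cdot 4^{d}\sim N^{3/2}$.

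For a solvable quotient $H$ with $|H|\le N$ one uses that the derived length of $H$ is at most $\log_{2}N$: the commutator tree accounts for the top $\tfrac12\log_{2}N$ levels, while the power leaves $x^{m}$ collapse the residual image sufficiently in the bottom levels, since the exponent $m\approx\sqrt N$ is chosen so that the relevant verbal subgroup is small enough to be killed by the remaining commutator depth. Non-solvable quotients are handled separately: any finite group of order $\le N$ has $O(\log N)$ non-abelian simple composition factors, each of bounded order, and these are dealt with by appending short universal identities specific to each such simple family (for example in $\PSL_{2}(\F_{q})$ and in small alternating groups); this contributes only a lower-order term to the total length.

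Nontriviality of $w_{N}$ in $\FF_{k}$ is verified via the Magnus expansion $\FF_{k}\hookrightarrow\Z\langle\langle X_{1},\ldots,X_{k}\rangle\rangle$ with $x_{i}\mapsto 1+X_{i}$: the lowest-degree term of $w_{N}-1$ is a nonzero iterated Lie bracket, which survives the expansion regardless of the specific choice of exponents. The hardest step is the quantitative balance between the leaf exponent $m$ and the commutator depth $d$, together with a careful verification that the resulting word vanishes in every quotient of size $\le N$ and not only in the solvable ones; dealing with non-solvable quotients, although conceptually secondary, requires explicit short identities in the bounded family of small non-abelian simple groups together with tight bookkeeping of their contribution inside the length budget $N^{3/2}$.
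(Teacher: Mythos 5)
Your quantitative target is the correct one (a nontrivial word of length $\sim N^{3/2}$ that is a law in every group of order at most $N$ does give $F_{\FF_k}(n)\succeq n^{2/3}$), but neither half of your argument delivers such a word. For solvable quotients, the mechanism you invoke is false: the ``relevant verbal subgroup'' generated by the values of the leaf $x^{m}$ is $H^{m}=\langle h^{m}:h\in H\rangle$, and there is no reason for it to be small -- if $\gcd(m,|H|)=1$ then \emph{every} element of $H$ is an $m$-th power and $H^{m}=H$, so the leaves collapse nothing; the only thing your balanced commutator of depth $d\approx\tfrac12\log_2 N$ could rely on is that $d$ exceeds the derived length of $H$, in which case the exponents $m$ play no role at all for these quotients. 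The real crux, however, is the general (non-solvable, non-simple) quotient, and there the proposal breaks down: a word that is a law in each composition factor is in general \emph{not} a law in the group, and you cannot ``append'' identities for the factors -- the standard way to pass through a composition (or derived) series is to substitute one identity into another, which \emph{multiplies} lengths, and with $\Theta(\log N)$ factors this destroys any polynomial budget. Moreover the simple factors are not ``of bounded order'': a group of order $\le N$ can have a composition factor such as $\PSL_2(\F_p)$ with $p\sim N^{1/3}$, or an alternating or higher-rank classical group of order comparable to $N$, and producing identities of length $O(N^{2/3})$ in \emph{all} finite simple groups of order at most $N$ is far beyond Theorem~\ref{thm:UziSL} and is certainly not a lower-order bookkeeping item. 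Finally, nontriviality of a balanced commutator whose $2^{d}$ leaves are only $x^{m}$ and $y^{m}$ is not automatic: already $[[x,y],[x,y]]=1$, and the corresponding iterated Lie bracket in the Magnus expansion can vanish identically, so one must insert conjugating letters and check non-cancellation, exactly the delicate point in Lemma~\ref{thm:commutator}.

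The paper's proof avoids every one of these issues with a structural input that is absent from your proposal: Lucchini's theorem (Theorem~\ref{thm:lucchiin}) on transitive groups with cyclic point stabilizers, in the form of Corollary~\ref{thm:normal} -- if $|x|\ge\sqrt{|\Gamma|}$ then some power $x^{\ell}$ with $\ell<\sqrt{|\Gamma|}$ generates a normal cyclic subgroup. The word used is the Hadad-type commutator $v_{n}=w_{n,n-1,\dots,1}$ of Corollary~\ref{thm:third}, built from \emph{all} the powers $x^{1},\dots,x^{n}$ of one letter (length $\le 4n^{2}(n+1)$), and the key point (Lemma~\ref{thm:lawN2}) is an element-order dichotomy valid in an arbitrary group $\Gamma$ with $|\Gamma|\le\tfrac19 n^{2}$: either $|\gamma_1|\le n$, and one of the leaves $\gamma_1^{r}$ is trivial, or $|\gamma_1|>n\ge 3\sqrt{|\Gamma|}$, and then two of the designated powers of $\gamma_1$ land in a normal abelian subgroup $N$, so the two halves of $v_n$ take values in $N$ and the top commutator dies in $[N,N]=1$. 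This requires no decomposition into solvable and simple pieces and no knowledge of identities in simple groups. As written, your proposal has a genuine gap precisely where this new ingredient is needed; to salvage it you would have to supply a valid device for arbitrary quotients, not a case split into solvable groups and composition factors.
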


The main new ingredient in the proof of Theorem~\ref{thm:main}
is a result of Lucchini (Theorem~\ref{thm:lucchiin}) about finite permutation groups.

\subsection*{Known results on the growth function $F_G$}

The following lemma of Bou-Rabee~\cite{bourabee1}
explains why the function $F_G^S$ is independent
on the generating set:
\begin{lemma}
\label{thm:remove-decoration}
Let $G$ be residually finite group generated by a finite set $S$.
If $H$ is a subgroup of $G$ generated by a finite set $T$, then
$H$ is residually finite and $F_H^T(n) \preceq F_G^S(n)$.
\end{lemma}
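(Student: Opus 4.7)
The plan is to break the lemma into two independent pieces: (i) residual finiteness passes to subgroups, and (ii) detecting a word inside $H$ is at least as efficient as detecting it inside $G$, after paying only a constant factor in word length.

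First I would verify the qualitative part. Given $w \in H$ with $w\neq 1$, view $w$ as an element of $G$ and pick a homomorphism $\pi:G\to K$ with $K$ finite and $\pi(w)\neq 1$, which exists by residual finiteness of $G$. Then the restriction $\pi|_H:H\to K$ is a homomorphism into a finite group with $\pi|_H(w)\neq 1$, so $H$ is residually finite. Moreover, replacing $K$ by the image $\pi(H)$ if needed, we obtain a homomorphism from $H$ to a finite group of order at most $|K|$ that detects $w$, giving the pointwise bound $k_H(w)\le k_G(w)$ for every $w\in H\setminus\{1\}$.

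Next I would compare the two word length functions. Set $C:=\max_{t\in T}|t|_S$, which is finite since $T$ is a finite subset of $G$. Rewriting each generator $t\in T$ as a word of length at most $C$ in $S$ yields $|w|_S\le C\,|w|_T$ for every $w\in H$. Consequently, if $|w|_T\le n$ then $|w|_S\le Cn$, so
\[
F_H^T(n)=\max_{|w|_T\le n}k_H(w)\le \max_{|w|_S\le Cn}k_G(w)=F_G^S(Cn),
\]
which is exactly the inequality needed to conclude $F_H^T\preceq F_G^S$.

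There is no real obstacle here; the only point that requires a touch of care is making sure that in step one we do not accidentally use a quotient of $H$ of size larger than $k_G(w)$, which is handled by restricting to the image of $\pi|_H$. The argument does not use freeness or any special structure of $G$, so the same proof would work for any finitely generated subgroup of a residually finite group.
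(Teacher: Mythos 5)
Your proof is correct and follows essentially the same route as the paper's (very brief) argument: a nontrivial element of $H$ is nontrivial in $G$, so it is detected by restricting a finite quotient of $G$, giving $k_H(w)\le k_G(w)$, and the linear comparison of word lengths $|w|_S\le C|w|_T$ then yields $F_H^T(n)\le F_G^S(Cn)$. Your write-up simply makes explicit (including the passage to the image of $\pi|_H$) what the paper leaves as a one-sentence observation.
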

The proof of this lemma is based on the observation that any nontrivial
element in $H$ is also a nontrivial in $G$ therefore it can be detected by
some finite quotient.

Applying Lemma~\ref{thm:remove-decoration} twice with $G=H$,
we see that we can drop the decoration $S$ in $F_G^S(n)$.
Moreover, this implies that the growth functions for
all non-abelian finitely generated free groups
are all equivalent.
Hence, we will restrict to study the function $F_{\FF_2}(n)$
for the free group on two generators.

\begin{example}
Most of the words in the free group $\FF_k$
can be detected using relatively small quotients. For example,
the word
$
w:=z^2 y^{23} x^{36} y^{33} z^{-26}
$
in $\FF_3$
has length $120$, but can be detected through the homomorphism
$\varphi:\FF_3 \to \Z/3\Z$
which maps $\varphi(y)=1 \pmod{3}$ and $\varphi(x)=\varphi(z)=0 \pmod{3}$.
\end{example}

This argument works, because the
word $w$ has nontrivial image in the abelianization $\Z^3$ of $\FF_3$.
In general, if a word $w \in \FF_k$ is detected by the abelianization $\Z^k$,
it is also detected in a suitable (relatively small) finite quotient of $\Z^k$.
Bou-Rabee~\cite{bourabee1} used the prime number theorem to show:

\begin{theorem}
\label{thm:Zk}
$F_{\Z^k}(n) \simeq \log(n)$.
\end{theorem}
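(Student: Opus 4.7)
The plan is to prove both bounds separately, each invoking the prime number theorem (PNT) in one direction.

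For the upper bound $F_{\Z^k}(n) \preceq \log n$, I would argue as follows. A nontrivial word $w \in \Z^k$ with $|w|_S \le n$ has some coordinate $w_i \neq 0$ with $|w_i| \le n$. Composing the coordinate projection $\Z^k \to \Z$ with reduction modulo a prime $p$ gives a quotient $\Z/p\Z$ that detects $w$ iff $p \nmid w_i$. By PNT (or Chebyshev's estimate), the product of the primes up to $x$ is $e^{x(1+o(1))}$, so for $x = C\log n$ with $C$ large enough this product exceeds $n \ge |w_i|$, forcing some prime $p \le C \log n$ not dividing $w_i$. The detecting quotient then has size $O(\log n)$.

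For the lower bound $F_{\Z^k}(n) \succeq \log n$ I would exhibit an explicit family of hard words. Set $N_m := \mathrm{lcm}(1,2,\dots,m)$ and consider $w_m := N_m \cdot e_1 \in \Z^k$, which has word length $N_m$ in the standard generating set. Given any homomorphism $\varphi \colon \Z^k \to H$ with $H$ finite and $\varphi(w_m) \ne 0$, let $d$ be the order of $\varphi(e_1)$; the nontriviality of $N_m \varphi(e_1)$ forces $d \nmid N_m$. Writing $d$ as a product of prime powers, some prime power $p^a$ dividing $d$ must satisfy $p^a > m$ (else $p^a \mid N_m$ and together they would divide $N_m$). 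Hence $|H| \ge d \ge p^a > m$, so $k_{\Z^k}(w_m) > m$. Now PNT gives $\log N_m = \psi(m) \sim m$, so setting $n = N_m$ we get $F_{\Z^k}(n) > m \sim \log n$, and extending to all $n$ by monotonicity yields $F_{\Z^k}(n) \succeq \log n$.

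Combining the two directions gives $F_{\Z^k}(n) \simeq \log n$. Neither step is genuinely difficult: the mild obstacle is simply the bookkeeping to reduce from a general finite quotient $H$ of $\Z^k$ to a cyclic one in the lower bound, which is handled cleanly by focusing on the order of $\varphi(e_1)$ rather than on the structure of $H$ itself. The essential input in both directions is the same asymptotic $\psi(m) \sim m$ from the prime number theorem, appearing once as ``enough small primes exist'' and once as ``$N_m$ grows only exponentially in $m$.''
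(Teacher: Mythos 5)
Your proof is correct, and it is essentially the argument the paper has in mind: the paper does not prove Theorem~\ref{thm:Zk} but cites Bou-Rabee's prime-number-theorem argument, which is exactly your two-sided use of $\psi(m)\sim m$ (small primes detect a nonzero coordinate for the upper bound, and $\mathrm{lcm}(1,\dots,m)\cdot e_1$ gives the hard words for the lower bound). Both directions check out, including the reduction to the order of $\varphi(e_1)$ in the lower bound.
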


Therefore, for any $w\in \FF_k \setminus [\FF_k,\FF_k]$ one has
$k_{\FF_k}(w) \leq C \log |w|$. This is sufficient to show that the
value of the function $k_{\FF_k}$ on a random word of length $n$ is
bounded above by a constant~\cite{rivin1}.

Theorem~\ref{thm:Zk} can be generalized to nilpotent and soluble groups, but
one needs to replace the logarithmic bound of Theorem \ref{thm:Zk} with a polylogarithmic one.
This suggests that the words $w$ in $\FF_k$, where $k_{\FF_k}(w)$ is large,
are the ones which lie deep in the lower central/derived  series.

\subsection*{Rephrasing in terms of laws}

One possible way to study the function $F_G$ is to notice that
an element $w$ can be detected by a finite quotient of size at most $n$
if and only if
$
w\not\in G_{n}
$,
where $G_n$ is the intersection of all normal subgroups of $G$
of index at most $n$. Thus, one can derive properties of the function
$F_G$ by estimating the word length of shortest element in the group $G_n$.

For example, let $G$ be the free group $\FF_k$.
It is well known~\cite{lubotzkysegalbook}
that $\FF_k$ has 
$a^{\lhd}_n(\FF_k) < n^{4k\log n}$ normal
subgroups of index at most $n$ (for all large enough $n$),
therefore the intersection
\[
\FF_{k,n} = \bigcap_{H \unlhd \FF_k, [\FF_k:H] \leq n} H
\]
has index at most
\[
[\FF_k : \FF_{k,n}] \leq n^{a^{\lhd}_n} < n^{n^{4k\log n}},
\]

for every $n$ large enough. This shows that $\FF_{k,n}$ has an element $w_n$
of length at most
$\log [\FF_k : \FF_{k,n}] \le  e^{4k\log ^2 n} \log n$.
By construction $w_n$ can not be detected by
any finite group of size at most $n$, therefore $k_{\FF_k}(w_n) > n$.
This shows that

\begin{lemma}
\label{thm:somelower}
$F_{\FF_k}(n) \succeq e^{\sqrt {\log n}}.$
\end{lemma}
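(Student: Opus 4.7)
The plan is to flesh out the outline already given in the excerpt, so the real work is to make precise the two estimates that the narrative glosses over: bounding $[\FF_k:\FF_{k,n}]$ and extracting a short element from this finite-index subgroup.

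First I would invoke the Lubotzky--Segal bound $a^{\lhd}_n(\FF_k) < n^{4k\log n}$ and note that the index of an intersection is at most the product of the indices (since $\FF_k/\FF_{k,n}$ embeds diagonally into the product of the finite quotients), yielding
\[
[\FF_k : \FF_{k,n}] \le n^{a^{\lhd}_n(\FF_k)} < n^{n^{4k\log n}}.
\]
Next I would produce a short word $w_n \in \FF_{k,n} \setminus \{1\}$ by the standard Cayley-graph pigeonhole argument: the $r$-ball in the $2k$-regular tree contains more than $(2k-1)^r$ vertices, so as soon as $(2k-1)^r > [\FF_k:\FF_{k,n}]$ two distinct reduced words of length $\le r$ must fall in the same coset of $\FF_{k,n}$. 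Their ratio then gives a nontrivial element of $\FF_{k,n}$ of length at most $2r$, so one may take
\[
|w_n| \;\le\; C_k \,\log\, [\FF_k : \FF_{k,n}] \;\le\; e^{4k \log^2 n}\,\log n.
\]

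Finally, by the very definition of $\FF_{k,n}$, every homomorphism of $\FF_k$ onto a group of order $\le n$ sends $w_n$ to the identity, so $k_{\FF_k}(w_n) > n$. Setting $m := |w_n|$, the display above gives $\log m \le C'_k \log^2 n$, hence $\log n \ge c_k \sqrt{\log m}$ and $n \ge e^{c_k \sqrt{\log m}}$. Therefore
\[
F_{\FF_k}(m) \;\ge\; k_{\FF_k}(w_n) \;>\; n \;\ge\; e^{c_k \sqrt{\log m}},
\]
which, after absorbing the constant into the $\succeq$ relation, is the claimed lower bound. The only nontrivial step is the Cayley-graph pigeonhole; everything else is just inverting a composition of polynomial and logarithmic functions.
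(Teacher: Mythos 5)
Your proposal is correct and follows essentially the same route as the paper: the Lubotzky--Segal bound on $a^{\lhd}_n(\FF_k)$, the product bound on $[\FF_k:\FF_{k,n}]$, a nontrivial element of $\FF_{k,n}$ of length $O(\log [\FF_k:\FF_{k,n}])$, and then inverting $\log^2 n$. The only difference is that you spell out the Cayley-graph pigeonhole step that the paper leaves implicit, which is a welcome addition rather than a deviation.
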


Of course this lower bound is not optimal, there are two
reasons for that -- first the bound $[\FF_k : \FF_{k,n}]$ is very far from the
correct one; and second the shortest element in $\FF_{k,n}$ is very likely to have
length significantly smaller than $\log [\FF_k : \FF_{k,n}]$.

\medskip

An equivalent, but slightly more convenient way to study the group $\FF_{k,n}$ is to
consider laws in finite groups.

\begin{definition}
Given a group $\Gamma$ an \emph{identity} or \emph{law} in
$\Gamma$ on $k$ letters
is a word $w(x_1,\ldots,x_k)$ in the free group $\FF_k$ such that
$w(g_1,\ldots,g_k)=1$, for all elements $g_1,\ldots,g_k \in \Gamma$.
We denote by $L_{\Gamma,k} \lhd \FF_k$ the subgroup of all identities in
$\Gamma$ and by $\alpha_k(\Gamma)$ the length of the shortest
identity in a group $\Gamma$.
\end{definition}

It is easy to see that
\[
\FF_{k,n} = \bigcap L_{\Gamma,k}
\]
where the intersection is taken over all isomorphic classes of
finite groups $\Gamma$ of size at most $n$. In particular one has
\begin{itemize}
\item if $\alpha_k(\Gamma) > l$ for some finite group $\Gamma$ of order $n$, then
$\FF_{k,n}$ does not contain any words of length $l$, which implies that
$F_{\FF_k}(l) \leq n$;
\item if a word $w\in \FF_k$ is an identity in any finite group
of order at most $n$ then $k_{\FF_k}(w)  \geq n$, i.e.,
$F_{\FF_k}(|w|) \geq n$.
\end{itemize}

\bigskip

These two observations allow us to obtain upper and lower bounds for $F_{\FF_k}$
by using results about identities in finite groups.
The following result of  Hadad~\cite{hadad1} can be used to
obtain an upper bound for $F_{\FF_k}(n)$.
\begin{theorem}
\label{thm:UziSL}
The length of the shortest identities in
$\SL_{2}(\F_q)$  and $\PSL_{2}(\F_q)$ satisfies
\[
\frac{q-1}{3} \le \alpha_k(\PSL_{2}(\F_q)) \le
             \alpha_k(\SL_{2}(\F_q))  < 10 (q + 2).
\]
\end{theorem}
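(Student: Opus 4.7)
The plan is to prove the three inequalities separately. The middle inequality $\alpha_k(\PSL_2(\F_q))\le\alpha_k(\SL_2(\F_q))$ is immediate: any identity of $\SL_2(\F_q)$ descends under the quotient map $\SL_2(\F_q)\twoheadrightarrow\PSL_2(\F_q)$ to an identity of $\PSL_2(\F_q)$.

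For the upper bound, the goal is to exhibit an explicit word $w\in\FF_2$ of length less than $10(q+2)$ that vanishes on every pair in $\SL_2(\F_q)$. The starting point is the trichotomy of element orders for $q=p^n$: each $g\in\SL_2(\F_q)$ is unipotent (the order of $g$ divides $2p$), split semisimple (dividing $q-1$), or non-split semisimple (dividing $q+1$). The key structural fact is that any two commuting non-central elements of $\SL_2(\F_q)$ lie in a common maximal torus, so the commutator of two elements of the same torus is trivial. I would build $w(a,b)$ as a short nested commutator whose inner pieces, of the form $a^{q-1}$, $b^{q+1}$, and a $p$-th power, collapse an arbitrary input to an element of a specific torus or to the center; a final outer commutator then vanishes. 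A naive product of the three exponents gives length quadratic in $q$, so the subtlety is to share subwords across the commutator structure, making the exponents $p$, $q-1$, $q+1$ contribute \emph{additively} rather than multiplicatively, which yields the linear bound $10(q+2)$.

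For the lower bound, the plan is to exhibit explicit witnesses $A,B\in\SL_2(\F_q)$ such that no nontrivial word of length less than $(q-1)/3$ vanishes on $(A,B)$. A natural choice is $A=\mathrm{diag}(\lambda,\lambda^{-1})$ with $\lambda$ a primitive root of $\F_q^*$, so that $A$ has order $(q-1)/\gcd(2,q-1)\ge(q-1)/2$ in $\PSL_2(\F_q)$, paired with a unipotent $B$ not contained in the normalizer of the diagonal torus. A reduced word $w(a,b)$ of length $\ell$ can be put in alternating form $A^{n_1}B^{\epsilon_1}\cdots A^{n_r}B^{\epsilon_r}$ with $\sum(|n_i|+|\epsilon_i|)\le\ell$. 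A ping-pong style analysis on $\mathbb{P}^1(\F_q)$, tracking the orbit of a basepoint through the alternating product of M\"obius transformations (equivalently, bounding the degree in $\lambda$ of each entry of the resulting matrix), shows that $w(A,B)=\pm I$ in $\PSL_2(\F_q)$ forces some $|n_i|\ge(q-1)/2$. This gives $\ell\ge(q-1)/2>(q-1)/3$, contradicting the hypothesis $\ell<(q-1)/3$.

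The main obstacle is the upper-bound construction: one must simultaneously annihilate three classes of inputs whose natural exponents $p$, $q-1$, $q+1$ are generically coprime, while keeping the total word length linear in $q$. The design of a nested commutator in which these exponents enter additively is the heart of Hadad's argument; the lower bound, while requiring care, follows the more standard template of ping-pong/polynomial-degree arguments for finite linear groups.
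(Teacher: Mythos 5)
This theorem is not proved in the paper at all: it is quoted from Hadad~\cite{hadad1}, and the paper only sketches the key ingredient of the upper bound as Lemma~\ref{thm:commutator}. Judging your proposal on its own merits, the middle inequality is fine (laws pass to quotients), but your lower-bound strategy contains a fatal error. You propose to fix a single pair of witnesses $A=\mathrm{diag}(\lambda,\lambda^{-1})$, $B$ unipotent, and to show that no nontrivial word of length less than $(q-1)/3$ vanishes at $(A,B)$. No such pair exists: for any fixed $A,B$ in a group of order $|\SL_2(\F_q)|=q^3-q$, the pigeonhole principle applied to the roughly $3^{r}$ reduced words of length at most $r$ yields, as soon as $3^{r}>q^3-q$, two distinct reduced words taking the same value at $(A,B)$, hence a nontrivial word of length $O(\log q)$ --- with all exponents $O(\log q)$ --- that vanishes at $(A,B)$. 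This directly contradicts your claim that $w(A,B)=\pm I$ forces some $|n_i|\ge (q-1)/2$. A linear lower bound is only possible because an identity must vanish at \emph{every} substitution, so the witness must be chosen \emph{after} the word $w$ is given. The degree idea can be salvaged, but in the opposite order: treat $\lambda$ as an indeterminate, show by a leading-term (ping-pong) argument, with $B$ chosen off the normalizer of the torus, that some entry of $w(\mathrm{diag}(\lambda,\lambda^{-1}),B)\mp I$ is a nonzero Laurent polynomial of degree bounded by roughly $|w|$, and then use that such a polynomial has fewer than $q-1$ roots to produce some $\lambda_0\in\F_q^{*}$, depending on $w$, at which $w$ does not vanish; one must also handle words in $k$ letters and the degenerate substitutions separately.

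The upper bound is likewise not established: you state the desiderata (exponents $p$, $q-1$, $q+1$ entering additively) but do not construct the word, and the mechanism you invoke is not the one that makes such constructions work. For generic $a,b$ the elements $a^{q-1}$ and $b^{q+1}$ are nontrivial, lie in different maximal tori, and need not commute, so ``collapsing both inputs into a common torus'' does not produce a law. The mechanism behind Hadad's bound, sketched in Lemma~\ref{thm:commutator}, uses powers of a \emph{single} letter: since every $g\in\SL_2(\F_q)$ satisfies $g^{q-1}=1$, $g^{q+1}=1$, or $g^{2p}=1$, one forms an iterated commutator of conjugates of $x^{q-1}$, $x^{q+1}$, $x^{2p}$ arranged so that, under any substitution, at least one of these subwords is already trivial and its triviality kills the whole commutator; the remaining work is to arrange the nesting so that no free cancellation occurs (so the word is nontrivial in $\FF_2$) and to do the bookkeeping showing the total length is linear in $q$, e.g.\ below $10(q+2)$. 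Without that explicit construction, together with the nontriviality check and the length count, your argument proves neither inequality involving $q$.
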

\begin{remark}
\label{thm:cubic-upper}
Let $w$ be a word in the free group of length $n$. By the above result
$w$ is not an identity in the group $\PSL_2(\F_p)$ for any prime $p > 3n +1$,
therefore $w$ is not in the kernel of some map $\FF_k \to \PSL_2(\F_p)$, i.e.,
$k_{\FF_2}(w) \leq |\PSL_2(\F_p)| = (p^3-p)/2$. Thus, we have
$F_{\FF_k}\left(\frac{p-1}{3}\right)  \leq (p^3-p)/2$, i.e.,
$F_{\FF_k}(n)\preceq n^3$.
\end{remark}

\begin{remark}
It seems the methods~\cite{hadad1} can be used to show that
the shortest identity satisfied in all groups of the form $\SL_2(R)$,
where $R$ is a finite commutative ring of size at most $N$,
has length $C N^{2}$. If this is indeed the case then
one can improve the upper bound $F_{\FF_k}(n) \preceq n^3$
to $F_{\FF_k}(n) \preceq n^{3/2}$.
\end{remark}

On the other side, it is easy see that $w=x^{n!}$
is an identity in any group of order at most $n$.
This can be used to obtain a lower bound for $F_{\FF_k}(n)$,
which is weaker then Lemma~\ref{thm:somelower}.
However, there is an easy way to improve this bound by constructing
a shorter identity by using the following lemma~\cite{hadad1},
which plays a central role in the proof of the
upper bound in Theorem~\ref{thm:UziSL}.

\begin{lemma}
\label{thm:commutator}
Let $r_1 > \ldots > r_m$ be a finite sequence of integers.
There exist a nontrivial word $w = w_{r_1,\dots,r_m}\in \FF_2$ of length at most
$4m^2\cdot (r_1+1)$ with the following property:
$w$ is a law in any finite group $\Gamma$ such that every
$\gamma \in \Gamma$ is a solution to at least one of these equations
\[
X^{r_1}=1, \ldots, X^{r_m}=1.
\]
\end{lemma}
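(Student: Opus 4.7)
The plan is to argue by induction on $m$. The base case $m=1$ is immediate: take $w(x,y)=x^{r_1}$, which has length $r_1 \le 4(r_1+1)$ and is visibly a law in every group in which every element satisfies $X^{r_1}=1$.

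For the inductive step I would invoke the lemma for $m-1$ to obtain a nontrivial word $v\in\FF_2$ of length at most $4(m-1)^2(r_2+1)$ which is a law in every finite group where each element satisfies some $X^{r_j}=1$ with $j\in\{2,\dots,m\}$. The central observation to exploit is: for any $g\in\Gamma$ satisfying the full hypothesis, the element $g^{r_1}$ itself satisfies some $X^{r_j}=1$ with $j\ge 2$. Indeed, writing $\Gamma_i=\{h:h^{r_i}=1\}$, if $g\in\Gamma_1$ then $g^{r_1}=1$, while if $g\in\Gamma_j$ for some $j\ge 2$ then $(g^{r_1})^{r_j}=(g^{r_j})^{r_1}=1$. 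So individual $r_1$-th powers are ``safe'' for the inductive hypothesis — but the \emph{subgroup} they generate is not: for instance, in $S_3$ with $r_1=3$ the subgroup of cubes is all of $S_3$. Hence the naive $w=v(x^{r_1},y^{r_1})$ is not generally a law, and one must build $w$ more carefully.

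The construction should therefore interleave the inductive word $v$ with an outer $r_1$-th-power / commutator layer. Schematically I expect $w$ to have the form
\[
w(x,y) \;=\; \prod_{i=1}^{O(m)} \bigl[\, v(a_i(x,y),\, b_i(x,y)) ,\, c_i(x,y)\,\bigr]^{r_1},
\]
where the $a_i,b_i,c_i$ are short auxiliary words: the outer $r_1$-th power trivializes each factor when its value lies in $\Gamma_1$, while the inductive property of $v$ handles the complementary case by forcing the bracket to lie in a subgroup in which the shorter hypothesis applies. The length estimate then telescopes: since $4m^2-4(m-1)^2 = 4(2m-1)$, one has a budget of $O(m(r_1+1))$ of extra letters per inductive step, which matches the outer layer above and gives $|w|\le 4m^2(r_1+1)$.

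The main obstacle is pinning down the correct template for $w$. The length budget $4m^2(r_1+1)$ is tight: one cannot afford to raise $v$ itself to the $r_1$-th power, nor substitute into $v$ words of length $\gg r_1/r_2$, so the inductive step must use $v$ in a structurally minimal way. After fixing the template, one verifies that $w$ is a law by a case analysis on the ``type'' of each intermediate value — i.e., on which $\Gamma_j$ it falls into — showing that in every case some outer $r_1$-th power or inner application of $v$ annihilates that factor. This case analysis is where the real work lives; once it is carried out, the length bound is a direct letter-count on the nested expression.
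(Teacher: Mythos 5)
Your proposal has a genuine gap: it never actually produces the word $w$, and the inductive route you sketch both misses the key idea and runs into a length obstruction. The construction in the paper (following Hadad) is not inductive and requires no case analysis on which equation each intermediate value satisfies. One takes a balanced iterated commutator whose $m$ ``leaves'' are the powers $x^{r_1},\dots,x^{r_m}$ of the \emph{single} letter $x$, with every other leaf conjugated by $y$ (e.g.\ for $m=4$, $w=[[x^{r_1},(x^{r_2})^{y}],[x^{r_3},(x^{r_4})^{y}]]$); the letter $y$ plays no role in the evaluation and is there only to prevent free cancellation, which is the sole delicate point (it guarantees $w\neq 1$ in $\FF_2$). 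The law property is then immediate: under any substitution $x\mapsto\gamma_1$, $y\mapsto\gamma_2$, the hypothesis gives $\gamma_1^{r_i}=1$ for some $i$, so one leaf evaluates to $1$, and since a commutator with a trivial entry is trivial, triviality propagates up the tree to give $w(\gamma_1,\gamma_2)=1$. The length bound is a direct count: each leaf has length at most $r_1+2$ and is repeated at most $2^{\lceil\log_2 m\rceil}\le 2m$ times, giving $|w|\le 2m^2(r_1+2)\le 4m^2(r_1+1)$.

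By contrast, your inductive template $\prod_i[v(a_i,b_i),c_i]^{r_1}$ cannot meet the length bound: any factor containing $v$ raised to the $r_1$-th power already has length at least $r_1\,|v|\approx 4r_1(m-1)^2(r_2+1)$, which is far larger than $4m^2(r_1+1)$ precisely in the regime needed for Corollary~\ref{thm:third}, where $r_i=n+1-i$ and $r_2\sim r_1$. Moreover the verification you defer (``the outer $r_1$-th power trivializes each factor when its value lies in $\Gamma_1$'') is not justified: the value of such a bracket need not satisfy $X^{r_1}=1$ just because its arguments do, since the sets $\Gamma_j=\{h: h^{r_j}=1\}$ are not subgroups in general. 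So both the template and the case analysis that you acknowledge as ``the real work'' are missing, and the correct route is the direct commutator construction above rather than induction on $m$.
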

\begin{proof}[Sketch of the proof:]
The nontrivial word $w = w_{r_1,\dots,r_m}$ 
is built
by taking a suitable iterated commutator of (conjugates of) the powers
$x^{r_1}, \ldots, x^{r_m}$ of the first letter $x$.
For example, if $m=4$ we build
\[
[x^{r_1}, (x^{r_2})^{y}], \qquad [x^{r_3},(x^{r_4	})^{y}],
\]
and then take their commutator
\[
w:=[[x^{r_1}, (x^{r_2})^{y}],[x^{r_3},(x^{r_4})^{y}]].
\]
For general $m$ one needs to be careful to create a
commutator word $w$ that has no internal cancellation.
\end{proof}

This lemma gives the following lower bound for the function $F_{\FF_k}(n)$.%
\footnote{This was obtained independently
by Bou-Rabee and McReynolds in~\cite{boumcrey1}.}

\begin{corollary}
\label{thm:third}
$F_{\FF_k}(n) \succeq n^{1/3}$.
\end{corollary}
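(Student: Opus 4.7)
The plan is to apply Lemma~\ref{thm:commutator} with a very naive sequence of exponents and then translate the resulting identity into a lower bound on $F_{\FF_k}$ via the second bullet point of the law/quotient dictionary.

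First I would fix a positive integer $N$ and take the decreasing sequence
\[
r_i := N - i + 1, \qquad i = 1, \ldots, N,
\]
so that $r_1 = N > r_2 = N-1 > \cdots > r_N = 1$ is precisely an enumeration of $\{1, 2, \ldots, N\}$. If $\Gamma$ is any finite group of order at most $N$, then by Lagrange's theorem every $\gamma \in \Gamma$ has order $d(\gamma) \leq N$, hence $\gamma$ satisfies $X^{d(\gamma)} = 1$, and $d(\gamma)$ appears in the list $r_1, \ldots, r_N$. Lemma~\ref{thm:commutator} therefore yields a nontrivial word
\[
w_N := w_{r_1, \ldots, r_N} \in \FF_2
\]
of length at most $4 N^2 (r_1 + 1) = 4 N^2 (N+1)$ which is a law in every finite group of order at most $N$.

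Next I would invoke the second observation in the paper: since $w_N$ is killed by every homomorphism from $\FF_k$ to a finite group of size at most $N$, we have $k_{\FF_k}(w_N) > N$. Setting $\ell := |w_N| \leq 4 N^2 (N+1)$, this gives
\[
F_{\FF_k}(\ell) \ \geq\ k_{\FF_k}(w_N) \ >\ N,
\]
and since $\ell = O(N^3)$ we conclude $F_{\FF_k}(n) \succeq n^{1/3}$, as desired. (By Lemma~\ref{thm:remove-decoration}, embedding $\FF_2 \hookrightarrow \FF_k$ transports the bound to every $k \geq 2$.)

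The only genuinely nontrivial content sits inside Lemma~\ref{thm:commutator}, whose proof must arrange the nested commutators of the powers $x^{r_1}, \ldots, x^{r_m}$ so that no internal cancellation occurs and the resulting word is still nontrivial in $\FF_2$ while remaining a law under the hypothesis on element orders; this is where the quadratic-in-$m$ factor arises. Granted that lemma, however, the corollary follows by nothing more than Lagrange's theorem and the correspondence between short elements of $\FF_{k,n}$ and laws in finite groups.
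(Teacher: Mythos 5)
Your proposal is correct and is essentially the paper's own argument: the paper also takes the sequence $n, n-1, \ldots, 1$, invokes Lemma~\ref{thm:commutator} to get the word $v_n = w_{n,n-1,\dots,1}$ of length at most $4n^2(n+1)$ which is a law in every group of order at most $n$ (since every element's order is at most the group order), and then applies the law/quotient dictionary to conclude $F_{\FF_k}(4n^3+4n^2) > n$, i.e.\ $F_{\FF_k}(n) \succeq n^{1/3}$. No gaps; the reduction to arbitrary $k$ via Lemma~\ref{thm:remove-decoration} is a harmless extra remark.
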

\begin{proof}
The order of any element in a finite group
is at most the size of the group. By Lemma~\ref{thm:commutator},
we can build a word $v_n:=w_{n,n-1,\dots,1}$ which is an identity in any finite group of size $\leq n$.
Moreover the length of $v_n$ is
$|v_n| \leq 4n^2(n+1)$, thus
$F_{\FF_k}(4n^3 + 4n^2) > n$.
\end{proof}

\medskip

\subsection*{Improvement of Corollary~\ref{thm:third}}
This construction can be significantly improved if one combines it with
a result of Lucchini about permutation groups~\cite{lucchini1}.

\begin{theorem}\label{thm:lucchiin}
Let $\Gamma$ be a transitive permutation group of degree $n > 1$
whose point-stabilizer subgroup is cyclic. Then $|\Gamma| \le n^2 - n$.
\end{theorem}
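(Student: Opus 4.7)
The plan is to reduce the bound $|\Gamma| \le n(n-1)$ to the statement $|H| \le n-1$ for the cyclic point-stabilizer $H$, since transitivity of degree $n$ gives $|\Gamma| = n|H|$ by orbit--stabilizer. Fix a base point $\omega_0 \in \Omega$ and write $H = \Gamma_{\omega_0} = \langle x \rangle$ of order $m$. Faithfulness of the transitive action makes the core $\bigcap_{g\in\Gamma} H^g$ trivial, and since $H$ is cyclic each nontrivial subgroup $H_d = \langle x^{m/d}\rangle$ (of order $d$) is characteristic in $H$; in particular $H_d$ cannot be normal in $\Gamma$, else it would lie in the core. Hence every nontrivial subgroup of $H$ has at least two $\Gamma$-conjugates.

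I would next analyse the cycle structure of $x$ on $\Omega$: the element $x$ fixes $\omega_0$ and partitions the remaining $n-1$ points into $\langle x\rangle$-orbits of sizes $d_1,\ldots,d_k$ with $d_1+\cdots+d_k = n-1$ and $\mathrm{lcm}(d_1,\ldots,d_k) = m$. The case $k=1$ is immediate: there $m = d_1 = n-1$, matching the stated bound with equality (as realised by Frobenius groups such as $F_{20}$). The interesting case is $k \ge 2$, where the $\mathrm{lcm}$ of the $d_i$ may vastly exceed each individual $d_i$, and so could, a priori, exceed $n-1$.

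For $k \ge 2$ my plan is a double count on the set $X = \bigcup_{\omega \in \Omega} \Gamma_\omega$ of elements of $\Gamma$ fixing at least one point. The Cauchy--Frobenius identity for the transitive action gives $\sum_{g \in \Gamma} |\mathrm{Fix}(g)| = |\Gamma| = nm$, hence $|X| - 1 \le n(m-1)$. Partitioning each nonidentity element of $X$ by the cyclic subgroup it generates yields the complementary expression $|X| = 1 + \sum_{d\mid m,\,d>1} c_d\, \phi(d)$, where $c_d \ge 2$ is the number of $\Gamma$-conjugates of $H_d$, using the core-freeness step above, together with $\sum_{d\mid m,\,d>1}\phi(d)=m-1$. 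Comparing the two expressions for $|X|$ against $|\Gamma|=nm$ is expected to pin $m$ down to at most $n-1$.

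The main obstacle is that the bound $c_d \ge 2$ extracted from core-freeness alone is fairly weak; to force $m \le n-1$ one must exploit transitivity of $\Gamma$ (not merely faithfulness) to control how distinct conjugates of $H$ intersect, feeding back into the Burnside estimate. Making the two estimates for $|X|$ compatible exactly at the threshold $m = n-1$ --- sharp on Frobenius examples like $F_{20}$ --- is the technical heart of the argument, and is where I would expect to spend the bulk of the effort.
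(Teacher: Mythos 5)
There is a genuine gap, and you have in fact flagged it yourself: everything up to the ``technical heart'' is either routine (orbit--stabilizer, core-freeness, the trivial case $k=1$ where the non-base points form a single $\langle x\rangle$-cycle) or is a counting framework that cannot, even in principle, deliver the bound. Note first that the paper does not prove this statement at all --- it quotes it from Lucchini~\cite{lucchini1} (remarking only that his proof is elementary and CFSG-free) --- so the question is whether your outline could be completed, and the proposed double count on $X=\bigcup_\omega \Gamma_\omega$ cannot. Since each $H_d$ is characteristic in the cyclic group $H$, every element normalizing $H$ normalizes $H_d$, so $c_d=[\Gamma:N_\Gamma(H_d)]\le[\Gamma:N_\Gamma(H)]\le[\Gamma:H]=n$ for every $d$. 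Hence your ``lower'' expression satisfies $1+\sum_{d\mid m,\,d>1}c_d\,\phi(d)\le 1+n(m-1)$, which is exactly the Cauchy--Frobenius ``upper'' bound, for \emph{every} value of $m$: the two estimates for $|X|$ are automatically compatible, so no refinement of the bounds $c_d\ge 2$ within this scheme can ever force $m\le n-1$. The Burnside identity $\sum_g|\mathrm{Fix}(g)|=nm$ is here just the tautology that the $n$ point stabilizers each contain $m-1$ nontrivial elements; it carries no information relating $m$ to $n$.

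A symptom of this is that even in the easiest nontrivial case, a Frobenius group (distinct stabilizers meeting trivially), the true reason for $m\le n-1$ is not a fixed-point count but semiregularity: no nontrivial element of $H$ fixes a second point, so every $H$-orbit on the remaining $n-1$ points has size exactly $m$, whence $m\mid n-1$. This points to where the real work lies: one must control the $H$-orbit sizes $d_i=m/|H\cap H^{g_i}|$, i.e.\ the intersections of $H$ with its conjugates, because the purely arithmetic constraints you isolate ($\sum_i d_i=n-1$ and $\mathrm{lcm}(d_i)=m$ by faithfulness) do not imply $m\le n-1$ (take $d_1=2$, $d_2=3$: the lcm is $6>5$). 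Supplying the group-theoretic input that rules out such configurations is precisely the content of Lucchini's argument (an elementary induction exploiting that all the intersections $H\cap H^g$ are subgroups of one cyclic group), and it is entirely absent from the proposal; also, as a small point, the claim $c_d\ge2$ needs only core-freeness and normality ($H_d\trianglelefteq\Gamma$ forces $H_d\le\bigcap_g H^g=1$), not the characteristic-subgroup remark, which instead works against you via $c_d\le n$ as above.
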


\begin{remark}
Unlike many similar results about permutation groups,
the proof of Lucchini's result is elementary and does not rely on the Classification
of Finite Simple Groups.
\end{remark}

In the proof of Theorem~\ref{thm:main}  we will use the following
corollary to Lucchini's result%
\footnote{This was obtained independently
by Herzog and Kaplan in~\cite{herzogkaplan} in 2001 and became known to the authors
only after the completion of this work.},
which will give us information in case a group has an element of ``large order''.

\begin{corollary}
Let $\Gamma$ be a finite group and let $x \in \Gamma$ such that
$|x| \ge \sqrt{|\Gamma|}$. Then there is an integer $\ell < \sqrt{|\Gamma|}$
such that $\langle x^\ell \rangle \unlhd \Gamma$.
\label{thm:normal}
\end{corollary}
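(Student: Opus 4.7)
The plan is to deduce this from Lucchini's theorem by looking at the natural action of $\Gamma$ on the cosets of the cyclic subgroup generated by $x$. Set $H=\langle x\rangle$, let $N$ be the normal core of $H$ in $\Gamma$ (that is, the intersection of all conjugates of $H$, which is the largest normal subgroup of $\Gamma$ contained in $H$), and let $\ell=[H:N]$ so that $N=\langle x^{\ell}\rangle$. The whole task is to show $\ell<\sqrt{|\Gamma|}$.

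First I would dispose of the trivial case $H=\Gamma$: then $\Gamma$ is cyclic, $N=H$ is itself normal, and $\ell=1$, so the bound holds as soon as $|\Gamma|>1$. Now assume $H\ne \Gamma$, so the degree $d:=[\Gamma:H]$ is at least $2$. By construction $\Gamma/N$ acts faithfully and transitively on the $d$ cosets of $H$ in $\Gamma$ (equivalently on the cosets of $H/N$ in $\Gamma/N$), and the stabilizer of the basepoint is $H/N$, which is cyclic because it is a quotient of the cyclic group $H$. Theorem~\ref{thm:lucchiin} therefore applies to $\Gamma/N$ and yields
\[
|\Gamma/N| \le d^{2}-d < d^{2}.
\]

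From here the rest is just arithmetic. Since $|\Gamma|=d\cdot|H|$ and $\ell=|H|/|N|$, the inequality $|\Gamma|/|N|<d^{2}$ rearranges to
\[
\ell \;=\; \frac{|H|}{|N|} \;<\; \frac{d^{2}|H|}{|\Gamma|} \;=\; \frac{d\cdot|H|}{|H|^{2}}\cdot|H| \;=\; \frac{|\Gamma|}{|H|}.
\]
Using the hypothesis $|x|=|H|\ge\sqrt{|\Gamma|}$ we conclude $\ell<|\Gamma|/|H|\le\sqrt{|\Gamma|}$, as required.

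There is no real obstacle once one thinks of invoking Lucchini on the coset action; the only point that needs care is checking that all three ingredients are in place simultaneously --- transitivity (automatic), faithfulness (built in by quotienting by the core $N$), and cyclicity of the point stabilizer (inherited from $H$). Everything else is a short index computation.
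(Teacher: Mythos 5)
Your argument is correct and is essentially the paper's own proof: both apply Lucchini's theorem to the transitive action of $\Gamma$ on the cosets of $\langle x\rangle$, take $N$ to be the kernel (core), and deduce from $[\Gamma:N]<[\Gamma:\langle x\rangle]^2$ that $\ell=[\langle x\rangle:N]<[\Gamma:\langle x\rangle]\le\sqrt{|\Gamma|}$. The only differences are cosmetic: you pass explicitly to the faithful quotient $\Gamma/N$ and dispose of the degree-one case $\langle x\rangle=\Gamma$ separately, points the paper leaves implicit.
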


\begin{proof}
The action of $\Gamma$ on the cosets $\Gamma/\langle x\rangle$ is transitive. If $N$
is the kernel of the action,
then $N \le \langle x \rangle$ is cyclic and, by Lucchini's Theorem
$[\Gamma:N]<|\Gamma:\langle x \rangle|^2$. Since
\[
[\Gamma:\langle x \rangle][\langle x \rangle:N]=[\Gamma:N]<[\Gamma:\langle x \rangle]^2,
\]
then 
\[
\ell:=[\langle x \rangle:N]<[\Gamma:\langle x \rangle]\le \sqrt{|\Gamma|}.
\]

Thus, $\langle x^\ell \rangle \le N \unlhd \Gamma$,
since $N$ is cyclic, we have $\langle x^\ell \rangle \unlhd \Gamma$.
\end{proof}
\begin{remark}
The previous result shows that if a group $\Gamma$
has elements of order larger than $\sqrt{\Gamma}$,
there are restrictions on the structure of $\Gamma$. The
next natural step would be to study a group $\Gamma$ with
elements of order larger than $\sqrt[3]{\Gamma}$. We observe that the Classification
of Finite Simple Groups implies that any non-abelian finite simple group $S$
does not have elements of order more than
$|S|^{1/3}$, thus it seems likely that existence an
element in a finite group $\Gamma$ of order more $|\Gamma|^{1/3}$
implies restrictions on the structure of $\Gamma$.
\end{remark}

\begin{lemma}
\label{thm:lawN2}
Let $n$ be a positive integer, then the word $v_n \in \FF_2$ constructed in
Corollary~\ref{thm:third} is a law in every group of order $|\Gamma| \le \frac{1}{9}n^2$.
\end{lemma}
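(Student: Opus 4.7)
The plan is to show that $v_n(g,h)=1$ for every pair $g,h\in\Gamma$, splitting the argument on the order of $g$.

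If $|g|\le n$, then $|g|$ is itself one of the exponents $r_1,\dots,r_n=n,n-1,\dots,1$, so some factor $g^{r_i}=1$ sits inside the iterated commutator defining $v_n$. Exactly as in the sketch of Lemma~\ref{thm:commutator}, a single trivial factor forces $[\,\cdot\,,\cdot\,]$ upwards along the commutator tree to collapse, yielding $v_n(g,h)=1$.

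Otherwise $|g|>n$. Since $|\Gamma|\le n^2/9$ one has $|g|>n\ge 3\sqrt{|\Gamma|}>\sqrt{|\Gamma|}$, so Corollary~\ref{thm:normal} produces an integer $\ell<\sqrt{|\Gamma|}\le n/3$ with $N:=\langle g^\ell\rangle\unlhd\Gamma$ cyclic, hence abelian. Because $\ell<n/3<n/2$, both contiguous halves $\{r_1,\dots,r_{\lfloor n/2\rfloor}\}$ and $\{r_{\lfloor n/2\rfloor+1},\dots,r_n\}$ of the index sequence are intervals of length $\ge n/2>\ell$ in $\{1,\dots,n\}$ and therefore each contains at least one multiple of $\ell$. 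At any such index $i$, $g^{r_i}\in N$, and $(g^{r_i})^h\in N$ by normality.

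Now I invoke the balanced shape of $v_n$: the construction in Lemma~\ref{thm:commutator} (consistent with the stated length bound $4m^2(r_1+1)\sim 4^{\log_2 m}(r_1+1)$) makes the outermost bracket take the form $v_n=[v^L,v^R]$, where $v^L$ and $v^R$ are iterated commutators whose leaves are indexed by the left and right halves respectively. A short induction on the depth of the commutator tree, using that $[a,b]\in N$ whenever $a\in N$ or $b\in N$ (a direct consequence of $N\unlhd\Gamma$), shows that any iterated commutator with at least one leaf in $N$ takes its value in $N$. By the previous paragraph this applies to both $v^L(g,h)$ and $v^R(g,h)$, and since $N$ is abelian,
\[
v_n(g,h)=[v^L(g,h),v^R(g,h)]\in[N,N]=1.
\]

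The main obstacle is pinning down this balanced structure of $v_n$ for general $n$: Lemma~\ref{thm:commutator} exhibits it explicitly only for $m=4$ and alludes to a more careful construction for general $m$ that avoids internal cancellation. A rigorous write-up must either verify that the outermost bracket of $w_{r_1,\dots,r_m}$ really does split the indices into two contiguous halves of size $\approx m/2$, or replace the last step by an argument that uses only the weaker property that ``enough'' indices $i$ with $\ell\mid r_i$ appear on both sides of the outermost commutator.
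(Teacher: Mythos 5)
Your argument is correct and follows essentially the same route as the paper's own proof: the same case split on whether $|g|\le n$, the same use of Corollary~\ref{thm:normal} with $\ell<\sqrt{|\Gamma|}\le n/3$ to get a normal cyclic $N$, the same observation that multiples of $\ell$ land in both halves of the exponent list, and the same conclusion $v_n(g,h)\in[N,N]=1$ via normality of $N$. The structural caveat you flag about the outermost bracket of $w_{r_1,\dots,r_m}$ splitting the indices into two blocks is precisely the property the paper also assumes from its sketched construction in Lemma~\ref{thm:commutator} (``the powers $x^s$ and $x^t$ are involved in $w'$ and $w''$ respectively''), so your write-up is no less complete than the original.
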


\begin{proof}
We want to show that the commutator word $v_n \in \FF_2$
is a law on any group
$\Gamma$ of order $|\Gamma| \le \frac{1}{9} n^2$, so we
evaluate $v_n(\gamma_1, \gamma_2)$ on any two elements
$\gamma_1, \gamma_2 \in \Gamma$. There are two cases:
\begin{itemize}
\item[(1)]
If $|\gamma_1| \le n$, then $\gamma_1^k =1$ for some $k<n$, thus
$v_n(\gamma_1, \gamma_2)=1$.

\item[(2)] If $|\gamma_1| > n$ then, by Corollary~\ref{thm:normal},
there is a power $\ell < n/3$ such that the cyclic group
$N:=\langle \gamma_1^\ell\rangle$ is normal in  $\Gamma$.
There exist at least two powers $\gamma_1^s,\gamma_1^t$,
for suitable $s < n/2< t  < n$ such that $\gamma_1^s,\gamma_1^t \in N$.
However, by construction $v_n$ is a commutator of two words $w'$ and $w''$
which are built as commutators conjugates of powers of $x^i$.
Since $N$ is normal and the powers $x^s$ and $x^t$
are involved in $w'$ and $w''$ respectively and we have
that $w'(\gamma_1,\gamma_2),w''(\gamma_1,\gamma_2) \in N$.
The group $N$ is abelian which implies that
\[
v_n(\gamma_1,\gamma_2)= [w'(\gamma_1,\gamma_2),w'(\gamma_1,\gamma_2)] \in [N,N] =1.
\]
\end{itemize}
In  both cases we have seen that $v_n(\gamma_1,\gamma_2)=1$, hence
$v_n$ is an identity in $\Gamma$.
\end{proof}

The previous Lemma immediately
implies that there exists a word of length $n$ in $\FF_2$
which cannot be detected by any group of size $n^{2/3}$ or less.
\begin{proof}[Proof of Theorem~\ref{thm:main}]
By Lemma~\ref{thm:lawN2} we have a word $v_n$ which is the identity in any finite group
of size $\le \frac{1}{9}n^2$ and, by Lemma~\ref{thm:commutator}, the length of $v_n$ is
$|v_n| \leq 4n^2(n+1)$. Therefore $F_{\FF_k}(4n^3 + 4n^2) > \frac{1}{9}n^2$.
\end{proof}

\begin{remark}
Theorem~\ref{thm:main} and Remark~\ref{thm:cubic-upper}
are also valid if the free group is replaced by a surface group.
The lower bound follows from Lemma~\ref{thm:remove-decoration},
Theorem~\ref{thm:main} and the observation that
any surface group contains a free subgroup.
Rivin has showed in~\cite{rivin1}
that the upper bound of Remark~\ref{thm:cubic-upper} can be extended
to surface groups.
\end{remark}

\subsection*{Open questions\label{sec:open-questions}}
From Theorem~\ref{thm:main} and Bou-Rabee's result on the upper bound,
one can ask the following
natural question:

\begin{question}
Is it true that $F_{\FF_k}(n) \simeq n$?
\end{question}

\begin{question}
What is the asymptotic behavior of the index of $[\FF_2:\FF_{2,n}]$
as a function of $n$?

Let $\FF_{2,n}^{\max}$ denote the intersection of all
maximal normal subgroup in $\FF_2$
of index at most $n$. Using estimates on the number of maximal normal subgroups, it can be shown that $[F_2:F_{2,n}^{\max}] \sim e^{n^{4/3}}$. Given that 
$[F_2:F_{2,n}] \ge [F_2:F_{2,n}^{\max}]$, this yields 
that $[F_2:F_{2,n}] \succeq e^{n^{4/3}}$. This may suggest, based on the weaker evidence of Lemma \ref{thm:somelower}, that $[F_2:F_{2,n}] \sim e^{n^\alpha}$, for some $\alpha$.
\end{question}

We define the \emph{divisibility function} of the free group
$D_{\FF_k}:\FF_k \setminus \{1\} \to \mathbb{N}$. The function is defined by
\[
D_{\FF_k}(w)=\min_{H \le \FF_k} \{[\FF_k:H] \mid w \not \in H\}.
\]
Bogopolski asked the following question in
the Kourovka notebook~\cite{kourovkanotebook}.

\begin{question}
Does there exist a $C=C(k)>0$ such that $D_{\FF_k}(w) \le C \log(|w|)$?
\end{question}

Bou-Rabee and McReynolds~\cite{boumcrey1}
showed that this question has a negative answer
establishing the following result on the lower bound.

\begin{theorem}
$\max_{|w| \le n}D_{\FF_k}(w) \not \preceq \log(n)$.
\end{theorem}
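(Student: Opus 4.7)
The plan is to reinterpret the divisibility function in terms of laws in symmetric groups, and then apply Lemma~\ref{thm:commutator} to produce short identities of $S_d$ for $d$ growing much faster than the logarithm of the word length.

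\emph{Step 1 (reformulation).} First I would show that for $w\in\FF_2$, $D_{\FF_2}(w)>d$ if and only if $w$ is an identity in the symmetric group $S_d$. Indeed, if $\varphi:\FF_2\to S_d$ satisfies $\varphi(w)\neq 1$, picking a point $i$ moved by $\varphi(w)$ gives $H=\varphi^{-1}(\mathrm{Stab}(i))$, a subgroup of index at most $d$ not containing $w$. Conversely, the coset action of any such $H$ on its $[\FF_2:H]$ cosets produces such a $\varphi$. I would also reduce the case $k\geq 2$ to $k=2$ using the retraction $\FF_k\to\FF_2$ killing the extra generators, which forces $D_{\FF_k}(w)=D_{\FF_2}(w)$ for $w$ in the embedded $\FF_2$.

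\emph{Step 2 (short laws in $S_d$).} I would then feed $S_d$ into Lemma~\ref{thm:commutator}, choosing $r_1>\cdots>r_m$ to be its distinct element orders. Every $\sigma\in S_d$ satisfies $\sigma^{|\sigma|}=1$ with $|\sigma|\in\{r_1,\ldots,r_m\}$, so the nontrivial word $w_d\in\FF_2$ produced by the lemma is an identity in $S_d$ and has length at most $4m^2(r_1+1)$. Using Landau's function $g$ and the partition function $p$, one has
\[
r_1\;=\;g(d)\;\le\;e^{(1+o(1))\sqrt{d\log d}},\qquad m\;\le\;p(d)\;\le\;e^{O(\sqrt d)},
\]
giving $|w_d|\le e^{C\sqrt{d\log d}}$ for some absolute constant $C$ and all sufficiently large $d$.

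\emph{Step 3 (conclusion).} Finally, for each $n$ I would pick $d_n$ so that $e^{C\sqrt{d_n\log d_n}}\le n$, forcing $d_n\asymp (\log n)^2/\log\log n$. The word $w_{d_n}$ is then a nontrivial element of $\FF_k$ of length at most $n$ with $D_{\FF_k}(w_{d_n})>d_n$ by Step~1, so
\[
\max_{|w|\le n}D_{\FF_k}(w)\;\succeq\;\frac{(\log n)^2}{\log\log n},
\]
which is strictly faster than $\log n$ and so not $\preceq\log n$.

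The main conceptual obstacle is Step~1, the dictionary translating $D_{\FF_k}$ into a question about laws in symmetric groups; after that, the crucial quantitative input is the subexponential growth of Landau's function, which lets Lemma~\ref{thm:commutator} produce laws in $S_d$ of length $e^{O(\sqrt{d\log d})}$ rather than the naive $e^{O(d)}$ coming from $\mathrm{lcm}(1,\ldots,d)$, and this square-root gap is exactly what beats $\log n$.
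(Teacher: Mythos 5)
Your argument is essentially correct, but note that the paper does not prove this statement at all: it is quoted from Bou-Rabee--McReynolds~\cite{boumcrey1}, so there is no in-paper proof to compare against. Your route --- translating ``$D_{\FF_2}(w)>d$'' into ``$w$ is a law in $S_d$'' via coset actions and point stabilizers, then feeding the distinct element orders of $S_d$ into Lemma~\ref{thm:commutator} and bounding $r_1$ by Landau's function $g(d)=e^{(1+o(1))\sqrt{d\log d}}$ and $m$ by the partition function $p(d)\le e^{O(\sqrt d)}$ --- is sound and yields the quantitative statement $\max_{|w|\le n}D_{\FF_k}(w)\succeq (\log n)^2/\log\log n$, which does rule out $\preceq\log n$ (a one-variable word $x^L$ can never beat $\log$, since forcing $x^L$ into every subgroup of index $\le d$ requires $\mathrm{lcm}(1,\dots,d)\mid L$, so the multi-variable commutator word of Lemma~\ref{thm:commutator} is exactly the right tool, used here inside $S_d$ rather than against all groups of order $\le n$ as in Corollary~\ref{thm:third}). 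One small repair in Step~1: the retraction $\FF_k\to\FF_2$ only gives $D_{\FF_k}(w)\le D_{\FF_2}(w)$, which is the direction you do not need. To get the lower bound on $D_{\FF_k}$, either intersect a subgroup $K\le\FF_k$ of index $\le d$ avoiding $w$ with the embedded $\FF_2$ (this has index $\le d$ in $\FF_2$ and still avoids $w$), or simply run your dictionary directly in $\FF_k$: restricting the coset action of $\FF_k$ on $\FF_k/K$ to $\FF_2$ gives a homomorphism to $S_d$ nontrivial on $w$, contradicting that $w_{d_n}$ (a two-variable word, hence a $k$-variable law in $S_d$ as well) is a law there. With that one-line fix the proof is complete and self-contained, resting only on Lemma~\ref{thm:commutator} plus the standard Landau and partition-function estimates.
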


On the other hand, Buskin~\cite{buskin1}
has an upper estimate using Stallings automata:

\begin{theorem}
$D_{\FF_k}(w) \le  \frac{|w|}{2} + 2$.
\end{theorem}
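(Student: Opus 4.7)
The plan is to use Stallings automata to construct, for each reduced word $w = x_1 x_2 \cdots x_n \in \FF_k$ (with each $x_i$ a generator or an inverse of a generator), a transitive action of $\FF_k$ on a set $X$ with $|X| \le \frac{n}{2} + 2$ such that $w$ moves the basepoint; the point stabilizer of that action is then the subgroup $H$ witnessing the bound on $D_{\FF_k}(w)$.

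First I would encode $w$ as a \emph{linear automaton} $L$ with vertices $v_0, v_1, \ldots, v_n$ and a directed edge labelled $x_i$ from $v_{i-1}$ to $v_i$. Because $w$ is reduced, no two consecutive edges carry inverse labels, so $L$ is already folded, and $w$ reads a non-closed path from $v_0$ to $v_n$. The goal is then to identify vertices of $L$ so as to cut the vertex count roughly in half, while simultaneously (a) maintaining the folded property, and (b) keeping $v_0$ and $v_n$ distinct after all identifications.

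Second, I would implement the identifications by a symmetric pairing scheme: match $v_i$ with $v_{n-i}$ whenever the labels $x_i$ and $x_{n-i+1}$ are compatible (more generally, walk along $w$ from both ends simultaneously and identify vertices two at a time as long as the forced Stallings foldings do not propagate into identifying $v_0$ with $v_n$). In the generic case this produces a folded graph with $\lceil n/2 \rceil + 1$ vertices in which $w$ is still a non-closed path; in degenerate cases a small constant number of auxiliary vertices (at most two) suffice to break unwanted identifications, yielding the bound $\frac{n}{2} + 2$. Once a folded graph $\Gamma$ on this many vertices is obtained, I would complete $\Gamma$ to a genuine Schreier coset graph on the \emph{same} vertex set by matching up the missing out-edges and in-edges for each label: at each generator the total number of missing outgoing and incoming slots on $\Gamma$ must agree, so the completion can always be carried out without introducing new vertices. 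The subgroup $H$ corresponding to this Schreier graph has index at most $\frac{n}{2} + 2$, and because $w$ labels a non-closed path at the basepoint in $\Gamma$, we have $w \notin H$.

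The main difficulty lies in the second step: controlling the chain of forced foldings triggered by the initial pairings, and proving that $v_0$ and $v_n$ can always be kept apart. The crucial input is that $w \ne 1$ as an element of $\FF_k$, which must be converted into a combinatorial statement ruling out a global collapse. I expect this to reduce to a short case analysis on the two endpoint labels $x_1$ and $x_n$, combined with an inductive argument on the folding sequence; the extra ``$+2$'' in the bound is precisely the slack needed to handle the endpoint cases and, if $n$ is odd, the middle letter.
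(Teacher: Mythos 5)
The outer frame of your plan is sound and standard: for reduced $w$ the linear graph on $v_0,\dots,v_n$ is folded; any finite folded graph can be completed to a covering of the rose on the same vertex set (for each generator the number of vertices missing an outgoing edge equals the number missing an incoming edge); and the stabilizer of $v_0$ in the resulting action is a subgroup of index equal to the number of vertices which does not contain $w$, since $w$ reads a non-closed path. That argument by itself gives $D_{\FF_k}(w)\le |w|+1$. The entire content of the stated bound is the halving step, and that is precisely where your proposal stops being a proof: you only ``expect'' the forced foldings to be controllable, and the specific reflection pairing $v_i\leftrightarrow v_{n-i}$ demonstrably fails. Take $w=a^n$ (or any word with a long block $a^{\pm m}$, or with $x_i=x_{n-i+1}$ along a long stretch). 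Either the pairing is declared incompatible, in which case nothing is saved and you are left with $n+1$ vertices; or you identify, say, $v_1$ with $v_3$ when $n=4$, and folding forces $v_2\sim v_4$ and $v_0\sim v_2$, hence $v_0\sim v_4$: the basepoint and the terminal vertex collapse. Folds propagate globally, so a bounded correction (``at most two auxiliary vertices'') cannot absorb an unbounded cascade, and the greedy variant (``identify two at a time as long as foldings do not propagate'') comes with no lower bound on how many identifications it actually performs, hence no bound of the form $|w|/2+2$ on the number of vertices. The claim that the generic case yields $\lceil n/2\rceil+1$ vertices is exactly the theorem and is nowhere established.

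For comparison: the paper does not prove this statement at all; it is Buskin's theorem, quoted with a citation, and the paper only records that his proof uses Stallings automata. So your general strategy is in the right spirit, but Buskin's construction of a folded graph with roughly $|w|/2+2$ vertices separating $w$ is a genuinely different and more delicate identification scheme than ``reflect the path onto itself and fold''; producing such a scheme (with a proof that the endpoints stay distinct and the vertex count is achieved for every reduced $w$) is the real work, and it is the piece missing from your proposal. Note also that isolated hard-looking cases such as $w=a^n$ are easy by other means (a small cyclic quotient detects $a^n$), but your construction is meant to be uniform, and these cases show the pairing heuristic itself is not sound rather than merely needing a two-vertex patch.
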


\subsection*{Acknowledgments}
The authors thank Nikolai Nikolov,
Khalid Bou-Rabee, Ben McReynolds and Enric Ventura
for helpful discussions.
The authors would also like to thank
Oleg Bogopolski and Mikhail Ershov for providing the reference~\cite{buskin1}
and Avinoam Mann for providing the reference ~\cite{herzogkaplan}. Finally
the authors would like to thank the referees for a careful reading which improved the form
of this paper.

The authors gratefully acknowledge the support from
the Centre de Recerca Matem\`atica, Barcelona
and the Universitat Polit\'ecnica de Catalunya
where most of work has been done in June 2009.

\bibliographystyle{amsplain}

\end{document}